\newtheorem{thm}{Theorem}[section]
\newtheorem{cor}[thm]{Corollary}
\newtheorem{rmk}[thm]{Remark}
\newtheorem*{thm*}{Theorem}
\newcommand{\caI}{\mathcal{I}}
\newcommand{\C}{\mathbb{C}}
\newcommand{\PP}{\mathbb{P}}
\newcommand{\OO}{\mathcal{O}}
\DeclareMathOperator{\cod}{cod}
\DeclareMathOperator{\codim}{codim}
\DeclareMathOperator{\rk}{rk}
\title[On the Xiao conjecture for plane curves]{On the Xiao conjecture for plane curves}
\thanks{The first named author has been partially supported by INdAM - GNSAGA and by FIRB2012 "Moduli spaces and applications", which are granted by MIUR, and by IMUB during his stay at the University of Barcelona; the second named author has been partially supported by the Proyecto de Investigaci\'on MTM2015-65361-P; the third named author is partially supported by PRIN 2015 
\emph{``Moduli spaces and Lie Theory''},  INdAM - GNSAGA and FAR 2016 
(PV) \emph{``Variet\`a algebriche, calcolo algebrico, grafi orientati e 
topologici''}. We would like to thank Miguel {\'A}ngel Barja, V{\'i}ctor Gonz{\'a}lez-Alonso and Carlos d'Andrea for useful comments.}
\author{F. F. Favale}
\address[Filippo F. Favale]{Department of Mathematics, 
Universit\`a degli Studi di Trento, via Sommarive 14,
38123 Trento, Italy}
\email{filippo.favale@unitn.it}
\author{J. C. Naranjo}
\address[Juan Carlos Naranjo]{Departament de Matem\`atiques i Inform\`atica, 
Universitat de Barcelona, Gran Via 585, 
08007 Barcelona, Spain}
\email{jcnaranjo@ub.edu}
\author{G. P. Pirola}
\address[Gian Pietro Pirola]{
Department of Mathematics, Universit\`a degli Studi di Pavia,
via Ferrata 1, 27100 Pavia, Italia}
\email{gianpietro.pirola@unipv.it}
\subjclass[2010]{14H50,14D06,14J99,14B10}
\begin{document}

\begin{abstract}
Let $f: S\longrightarrow B$ be a non-trivial fibration from a complex projective smooth surface $S$ to a smooth curve $B$ of genus $b$. Let $c_f$ the Clifford index of the general fibre $F$ of $f$. In \cite{BGN} it is proved that the relative irregularity of $f$, $q_f=h^{1,0}(S)-b$ is less or equal than or equal to $g(F)-c_f$. In particular this proves the (modified) Xiao's conjecture: $q_f\le \frac {g(F)}2 +1$ for fibrations of general Clifford index. In this short note we assume that the general fiber of $f$ is a plane curve of degree $d\ge 5$ and we prove that $q_f\le g(F)-c_f-1$. In particular we obtain the conjecture for families of quintic plane curves. This theorem is implied for the following result on infinitesimal deformations: let $F$ a smooth plane curve of degree $d\ge 5$ and let $\xi$ be an infinitesimal deformation of $F$ preserving the planarity of the curve. Then the rank  of the cup-product map $H^0(F,\omega_F) {\overset{ \cdot \xi} \longrightarrow} H^1(F,O_F)$ is at least $d-3$. We also show that this bound is sharp.       
\end{abstract}

\maketitle


\section{Introduction}

\noindent Let $f: S\longrightarrow B$ be a non-isotrivial fibration from a complex projective smooth surface $S$ to a smooth curve $B$ of genus $b$. A natural question is trying to understand the relation between the invariants of the surface, the base curve $B$ and of the general fibre $F$. Non-isotrivial means that the smooth fibres are not isomorphic to each other, in other words: the natural modular map $B^0\longrightarrow \mathcal M_g$ in the moduli space of curves of genus $g=g(F)$ is not constant, where $B^0$ is the open set of $B$ with smooth fibres.     
The invariants we consider are the relative irregularity $0\le q_f:=h^{1,0}(S)-g(B)$ and the genus $g$ of the general fibre. Xiao proved in \cite{Xiao1} that for non-isotrivial fibrations the inequality
\[
 q_f\le \frac {5g+1}6
\]
holds, 
and he formulated in \cite{Xiao2} the conjecture
\[
 q_f \le \frac{g+1}2.
\]
After some counterexamples found by Pirola in \cite{Pir} (see also \cite{AP}) the conjecture is nowadays reformulated as follows

\noindent {\bf Modified Xiao's conjecture:} For non-isotrivial fibrations it holds that
\[
 q_f \le \frac{g}2+1
\]
Observe that it is equivalent to the initial conjecture for $g$ odd.
\vspace{2mm}

\noindent There are some evidences for this conjecture: Xiao proved that it holds if $b=0$ and it is known to be true when $F$ is hyperelliptic (see \cite{Cai}). Moreover in \cite{BGN} a upper bound of $q_f$ is found in terms of the Clifford index $c_f$ of the general fibre. Remember that the Clifford index of the curve $F$ is defined as:
\[
 \text{Cliff}(F):=\min \{deg (L) -2(h^0(F,\OO_F(L))-1) \mid h^0(F,\OO_F(L))\ge2, h^1(F,\OO_F(L))\ge 2\}. 
\]
Clifford's Theorem states that $\text{Cliff}(F)\ge 0$ and it is $0$ if and only if $F$ is hyperelliptic. It is known that $\text{Cliff}(F)=1$ if and only if either $F$ is trigonal or isomorphic to a smooth quintic plane curve. A smooth plane curve of degree $d$ has Clifford index $d-4$ and a general curve in $\mathcal M_g$ has Clifford index $\lfloor \frac {g-1}2 \rfloor$.  
\vspace{2mm}

\noindent The main theorem in \cite{BGN} says that for a non-isotrivial fibration it holds
\[
 q_f\le g-c_f.
\]
In particular the conjecture is true when the general fiber has general Clifford index. Combining the results given above  one easily checks that the conjecture is true for $g\le 4$ and that the first open case corresponds to $g\ge 5$ and $c_f=1$. In this paper we take care of the quintic plane curve case. More generally we consider families of smooth plane curves of degree $d\ge 5$. Our main theorem is the following:

\begin{thm}\label{main1}
 Let $f:S\longrightarrow B$ be a fibration such that the general fibre is a plane curve of degree $d\ge 5$. Then
 \[
  q_f \le g-c_f-1=g-(d-4)-1=g-d+3
 \]
\end{thm}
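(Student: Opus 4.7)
First, I would reduce the theorem to an infinitesimal statement. Let $t \in B^0$ be a general point and $\xi \in H^1(T_{F_t})$ the Kodaira--Spencer class of $f$ at $t$. Because the family consists of smooth plane curves of degree $d$, the class $\xi$ is \emph{planar}, that is, it lies in the image of the natural map $H^0(N_{F_t/\PP^2}) = H^0(\OO_{F_t}(d)) \to H^1(T_{F_t})$. The standard argument reviewed in \cite{BGN} produces a subspace $W \subset H^0(\omega_{F_t})$ of dimension $q_f$ on which the cup product $\cdot\xi : H^0(\omega_{F_t}) \to H^1(\OO_{F_t})$ vanishes, yielding $q_f \leq g - \mathrm{rank}(\cdot\xi)$. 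Hence Theorem \ref{main1} follows from the infinitesimal statement announced in the abstract: for every nonzero planar $\xi$ on a smooth plane curve of degree $d \geq 5$, $\mathrm{rank}(\cdot\xi) \geq d-3$.

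Second, I would translate this infinitesimal statement into commutative algebra via the Griffiths residue description of the IVHS of a smooth plane curve. Let $R = \C[x_0,x_1,x_2]/(F_{x_0}, F_{x_1}, F_{x_2})$ be the Jacobian ring of $F$. One has canonical identifications $H^0(\omega_F) \cong R^{d-3}$, $H^1(\OO_F) \cong R^{2d-3}$, and planar deformations correspond to classes $G \in R^d$, with the cup product $\cdot\xi$ becoming multiplication $m_G : R^{d-3} \to R^{2d-3}$. Since $R$ is Artinian Gorenstein with socle in degree $3d-6$, the perfect pairing $R^{d-3} \times R^{2d-3} \to R^{3d-6} \cong \C$ identifies $m_G$ with the symmetric bilinear form $B_G(P,Q) := PGQ$ on $R^{d-3} = S^{d-3}$ (of dimension $g$), and the goal becomes: $\mathrm{rank}(B_G) \geq d-3$ for every $0 \neq G \in R^d$.

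Third, I would attack the algebraic bound by contradiction. Suppose $\mathrm{rank}(B_G) \leq d-4$, so $\dim K \geq g - d + 4$, where $K := \ker B_G$. The main theorem of \cite{BGN} already forces $\mathrm{rank}(B_G) \geq c_f = d-4$, so we are in the \emph{borderline} case $\dim K = g - d + 4$. In this BGN-extremal case the adjoint-image analysis pins down the structure of $K$ precisely in terms of the unique Clifford-minimal $g^2_d$ of $F$, namely the plane embedding $|\OO_F(1)|$. The plan is then to propagate the vanishing $G \cdot K = 0$ in $R^{2d-3}$ through the Jacobian ring: exploiting the identity $R^{d-3} \cdot R^{d-3} = R^{2d-6}$ (a direct consequence of the surjectivity of the polynomial multiplication $S^{d-3} \otimes S^{d-3} \to S^{2d-6}$) together with the explicit description of $K$, one should conclude $G \cdot R^{2d-6} = 0$ in $R^{3d-6}$, whence $G = 0$ in $R^d$ by non-degeneracy of the Gorenstein pairing, contradicting the hypothesis.

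The hard part is the third step. BGN's bound $\mathrm{rank}(\cdot\xi) \geq c_f$ is sharp for generic $\xi \in H^1(T_F)$, so the \textbf{+1} improvement to $d-3$ must genuinely exploit that $G$ lives in $R^d$, i.e.\ is a \emph{planar} deformation rather than an abstract element of an extremal locus realizing the BGN bound. I expect the propagation argument to rely on an interplay between the Koszul resolution of the regular sequence $(F_{x_0}, F_{x_1}, F_{x_2})$ and the pencil-trick cocycle of $|\OO_F(1)|$, which together should rule out the extremal configuration for planar $G$. Finally, the sharpness of the bound $d-3$ asserted in the abstract should be witnessed by constructing an explicit planar deformation for which $\mathrm{rank}(m_G)$ attains exactly $d-3$, most likely coming from a deformation that preserves a specific linear form or divisor on $F$.
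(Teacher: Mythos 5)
Your first two steps (reduction to a lower bound on the rank of the planar Kodaira--Spencer class, and translation into the Jacobian ring $R$ with $H^0(\omega_F)\cong R^{d-3}$, $H^1(\OO_F)\cong R^{2d-3}$, $\xi\in R^d$) coincide exactly with the paper's reduction of Theorem \ref{main1} to Theorem \ref{THM:MAIN}. The gap is in your third step, which is where all the actual content of the theorem sits, and which you present only as a plan (``I expect\dots'', ``should rule out\dots''). Two concrete problems. First, the appeal to \cite{BGN} to get $\mathrm{rank}(\cdot\xi)\ge c_f=d-4$ for an \emph{arbitrary} nonzero planar class is unjustified: the theorem of \cite{BGN} bounds $q_f$ for a fibration by exploiting that the relevant subspace of $W_\xi$ consists of restrictions of global $1$-forms on $S$; it is not a statement about the rank of every nonzero $\xi\in H^1(T_F)$ (Schiffer-type variations have rank $1$), nor is it proved there for every class in $R^d$. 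So your ``borderline case'' framework has no established floor to stand on, and the hypothetical ``explicit description of $K$ in terms of the $g^2_d$'' is never produced.

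Second, even granting that setup, the propagation step is the real difficulty and your identity $R^{d-3}\cdot R^{d-3}=R^{2d-6}$ does not address it. To contradict Macaulay's duality (Theorem \ref{THM:MACAU}) one needs $K\cdot R^{d-3}=R^{2d-6}$ for the \emph{kernel} $K=W_\xi$, not for all of $R^{d-3}$; only then does $\xi\cdot K=0$ in $R^{2d-3}$ force $\xi\cdot R^{2d-6}=0$. The paper obtains this surjectivity from Green's Theorem \ref{THM:GREEN} when the linear system $|W_\xi|\subset |\OO_{\PP^2}(d-3)|$ is base point free and $\mathrm{codim}\,W_\xi\le d-3$; when $|W_\xi|$ has base points this fails outright, and the bulk of the proof is a two-step lifting $W\rightsquigarrow W_1\subset S^{d-2}\rightsquigarrow W_2\subset S^{d-1}$, controlled by Green's hyperplane restriction theorem (Theorem \ref{THM:GREENHYP}) to keep the codimension from growing, followed by adjoining the partials $f_x,f_y,f_z$ to $W_2$ to kill the base locus (they cannot simultaneously vanish on a smooth curve) and strictly drop the codimension before reapplying Theorem \ref{THM:GREEN}. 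This base-locus dichotomy, which is the heart of the argument and the source of the ``$+1$'' improvement over the Clifford-index bound, is entirely absent from your proposal.
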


\noindent In the case $d=5$, hence $g=6$, we obtain $q_f \le 4$ which is the predicted bound. Hence we obtain:
\begin{cor}
The modified Xiao's conjecture holds for fibrations with general fibre a quintic plane curve.
\end{cor}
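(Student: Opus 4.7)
The plan is to derive the corollary as an immediate numerical specialisation of Theorem \ref{main1}. First I would record the invariants of a smooth plane quintic: by the genus--degree formula the general fibre $F$ has genus $g = \binom{d-1}{2} = \binom{4}{2} = 6$, and by the recollection in the introduction a smooth plane curve of degree $d$ has Clifford index $c_f = d-4$, which here specialises to $c_f = 1$. This is consistent with the classical characterisation recalled earlier: a curve has Clifford index $1$ precisely when it is trigonal or a smooth plane quintic, and the quintic case is exactly the one the corollary targets.

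Next I would apply Theorem \ref{main1} in the case $d=5$. Since the hypothesis of the corollary is exactly that the general fibre of $f:S\to B$ is a smooth plane quintic, the theorem yields
\[
q_f \;\le\; g - d + 3 \;=\; 6 - 5 + 3 \;=\; 4.
\]
On the other hand the modified Xiao bound, for $g=6$, reads $q_f \le g/2 + 1 = 4$, so the inequality just obtained is precisely the conjectured one, and the corollary follows.

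There is essentially no obstacle here: once Theorem \ref{main1} is granted, the corollary reduces to a one-line arithmetic check with $d=5$, together with the observation that the quintic case is exactly the first open case left by \cite{BGN} (where $c_f=1$ and $g=6$). The only standing hypothesis worth noting is that both the modified Xiao conjecture and Theorem \ref{main1} are formulated in the non-isotrivial setting described in the introduction, so the corollary inherits the same hypothesis without requiring additional argument.
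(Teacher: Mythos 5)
Your proposal is correct and matches the paper's own derivation exactly: for $d=5$ the genus is $g=6$, Theorem \ref{main1} gives $q_f\le g-d+3=4$, and this coincides with the modified Xiao bound $g/2+1=4$. Your additional remarks on $c_f=1$ and the non-isotriviality hypothesis are accurate but not needed beyond the one-line arithmetic check the paper performs.
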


\noindent The idea of the proof of (\ref{main1}) is as follows: let us fix a general point of $B$ and let $F$ the fibre at this point, then $f$ induces an infinitesimal deformation $\xi \in H^1(F,T_F)$. The kernel $W_{\xi }$ of the cup-product map		\noindent -
\[		
H^0(F,\omega_F) {\overset{ \cdot \xi} \longrightarrow} H^1(F,\OO_F)		
\]		
contains the vector space $H^0(S,\Omega^1_S)/f^*H^0(B,\omega_B)$ (see \cite[Section 2]{BGN} for the details). Therefore $q_f \le \dim W_{\xi}=g-rank (\cdot \xi)$. Thus it is enough to find a lower bound of the rank of the map given by the cup-product with $\xi$. Then the next Theorem immediately implies Theorem (\ref{main1}):		
\begin{thm}\label{main2}		
 Let $\xi$ be an infinitesimal deformation of $F$ as smooth plane curve, then the rank of the map  $H^0(F,\omega_F) {\overset{ \cdot \xi} \longrightarrow} H^1(F,\OO_F)$ is at least $d-3$, and this bound is realized for the Fermat curve.		
\end{thm}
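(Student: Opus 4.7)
The plan is to reduce the statement to a multiplication problem in the Jacobian ring and then bound the rank from below. Write $F=V(f)$ with $f\in S_d=\mathbb{C}[x_0,x_1,x_2]_d$ and let $R=S/J(f)$ with $J(f)=(f_0,f_1,f_2)$, $f_i=\partial f/\partial x_i$. By Griffiths' residue theory one has canonical isomorphisms $H^0(F,\omega_F)\cong R_{d-3}$ and $H^1(F,\mathcal{O}_F)\cong R_{2d-3}$, while the subspace of $H^1(F,T_F)$ parametrising planar first-order deformations is identified with $R_d$. Under these identifications the cup-product with a deformation $\xi$ represented by $h\in R_d$ becomes the multiplication map $\mu_h\colon R_{d-3}\to R_{2d-3}$, $g\mapsto gh$. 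Thus the theorem is equivalent to the algebraic statement: for every $0\neq h\in R_d$, $\mathrm{rank}(\mu_h)\geq d-3$, with equality on the Fermat curve.

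For sharpness I would take the Fermat curve $F\colon x_0^d+x_1^d+x_2^d=0$, whose Jacobian ideal is $J=(x_0^{d-1},x_1^{d-1},x_2^{d-1})$ and whose Jacobian ring has a monomial basis indexed by exponents $\leq d-2$. Setting $h=x_1^2 x_2^{d-2}\in R_d$, a direct computation shows that a monomial $g=x_0^{\alpha_0}x_1^{\alpha_1}x_2^{\alpha_2}$ of degree $d-3$ satisfies $gh\neq 0$ in $R$ precisely when $\alpha_2=0$ and $\alpha_1\leq d-4$: there are exactly $d-3$ such monomials, and their images are distinct non-zero monomials in $R_{2d-3}$, so $\mathrm{rank}(\mu_h)=d-3$.

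For the lower bound I would argue by contradiction: suppose $\mathrm{rank}(\mu_h)\leq d-4$ and set $K:=\ker\mu_h\subset H^0(F,\omega_F)$, so that $\dim K\geq\binom{d-1}{2}-(d-4)=\binom{d-2}{2}+2$. Let $B$ be the base divisor of the linear series $|K|$ and $L:=\omega_F(-B)$, so $K\subset H^0(L)$ and $h^0(L)\geq\dim K$. Since $F$ is a smooth plane curve of degree $d\geq 5$, $\mathrm{Cliff}(F)=d-4$ is realised only by $\mathcal{O}_F(1)$ and its Serre dual $\mathcal{O}_F(d-4)$, which have $h^0$ equal to $3$ and $\binom{d-2}{2}$ respectively -- both strictly smaller than $\binom{d-2}{2}+2$. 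Comparing $h^0(L)$ with the forced lower bound on $\dim K$ therefore excludes $L\in\{\mathcal{O}_F,\mathcal{O}_F(1),\mathcal{O}_F(d-4)\}$ immediately; for non-extremal $L$ with $h^1(L)\geq 2$, strict Clifford combined with $\dim K\geq\binom{d-2}{2}+2$ forces $\deg B\leq d-5$.

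The main obstacle is the base-point-free case $B=0$ (so $L=\omega_F$), where Clifford's inequality is silent; this is exactly the configuration of the Fermat sharp example (with $\dim K=\binom{d-2}{2}+1$, one unit below the contradiction value). To extract this extra unit I would work directly in the Jacobian ring: the relation $Kh\subset J_{2d-3}=S_{d-2}\cdot(f_0,f_1,f_2)$ produces, for each $g\in K$, unique $a_0,a_1,a_2\in S_{d-2}$ with $gh=\sum a_i f_i$, and since $(f_0,f_1,f_2)$ is a regular sequence the Koszul syzygies of $(h,f_0,f_1,f_2)$ in this range have $h$-component of degree $d-1\neq d-3$, so all such syzygies are ``exotic''. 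Their number is controlled by the Hilbert function of $R/(h)$; combined with the $\deg B\leq d-5$ estimate from the Clifford step and the complete-intersection structure of $J(f)$, this should yield the final contradiction. The technical heart of the proof is therefore this syzygy estimate: the $+1$ improvement over the BGN Clifford bound must be extracted from the multiplicative structure of the Jacobian ring rather than from a Brill--Noether argument.
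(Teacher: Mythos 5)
Your reduction to the Jacobian ring and your sharpness computation on the Fermat curve are both correct and coincide with the paper's (the paper takes $\xi=[x^{d-2}y^2]$, which is your $x_1^2x_2^{d-2}$ up to permuting the variables). The problem is the lower bound, which is the actual content of the theorem: what you give is not a proof but a plan with an acknowledged hole exactly at the decisive point. The Clifford/Brill--Noether step can at best reproduce the inequality $\rk(\cdot\xi)\ge d-4$ already contained in \cite{BGN}, and you correctly observe that the case $L=\omega_F$ escapes it; this case does occur (it is precisely the configuration of the Fermat example, where $\dim K=\binom{d-2}{2}+1$, one below your contradiction threshold), so it cannot be waved away. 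But the ``syzygy estimate'' you invoke to extract the missing $+1$ is never formulated as a precise statement, let alone proved: ``their number is controlled by the Hilbert function of $R/(h)$ \dots this should yield the final contradiction'' is not an argument, and the observation that the Koszul syzygies of $(h,f_x,f_y,f_z)$ live in the wrong degree does not by itself bound the non-Koszul ones. As written, the proposal establishes nothing beyond the bound of \cite{BGN}.

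For comparison, the paper's mechanism is different and runs through the base locus of $W=\ker(\cdot\xi)\subset S^{d-3}$ as a linear system on $\mathbb{P}^2$ (not on $F$, which is a genuinely different dichotomy from yours). If $|W|$ is base-point-free on $\mathbb{P}^2$, Green's theorem (surjectivity of $W\otimes S^m\to S^{m+d-3}$ for $m\ge\codim W$) combined with Macaulay duality in $R$ gives $\rk(\xi)\ge d-2$ at once. If $|W|$ has base points, Green's hyperplane restriction theorem produces subspaces $W_1\subset S^{d-2}$ and $W_2\subset S^{d-1}$ with the same base locus and non-increasing codimension; then $\tilde W=W_2+\langle f_x,f_y,f_z\rangle$ is base-point-free and, since the partials have no common zero while $W_2$ does, the codimension drops strictly by at least one --- this is exactly where the $+1$ over the Clifford-type bound is gained --- after which Green plus Macaulay again yield the contradiction. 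If you wish to salvage your approach, the lesson from the paper is that the extra unit comes from the emptiness of the base locus of the Jacobian ideal, and any replacement for your unproved syzygy step will have to use that input in some form.
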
		
\noindent The rest of the paper is devoted to the proof of Theorem (\ref{main2}). In the next section we recall how to use the Jacobian ring of a plane curve in order to understand the cohomology of the curve $F$ and the cup-product maps. We also state two theorems of Green on multiplication and restriction maps of polynomials. In section $3$ we use these facts combined with the classical Macaulay's Theorem to prove Theorem (\ref{main2}).


\section{Preliminaries}

\noindent Let $F$ be a smooth curve of genus $g\geq 3$ and an let  $\xi\in H^1(F,T_F)$ be a non-trivial infinitesimal deformation of $F$. To simplify notations we call 
{\it rank} of $\xi$ to the rank of the cup-product map
\[
H^0(F,\omega_F) {\overset{ \cdot \xi} \longrightarrow} H^1(F,\mathcal O_F)
\]
considered in the Introduction. We keep the notation $W_{\xi}$ for the kernel of this map.

\noindent Let $F$ be a smooth plane curve defined as the zero locus of the homogeneous polynomial $f\in \C[x,y,z]=S$ of degree $d$. Its {\it Jacobian ring} $R$ is the graded ring 
\[
 R=\bigoplus_{n\geq 0}R^n=\bigoplus_{n\geq 0}\left(S^n/J^n\right).
\]
Here $J^n$ is the degree $n$ part of the {\it Jacobian ideal} $J=(f_x,f_y,f_z)$. As $F$ is smooth, $\{f_x,f_y,f_z\}$ is a regular sequence and this implies that $R$ satisfies the following properties:

\begin{thm}[Macaulay]
\label{THM:MACAU}
Let $N$ be $3(d-2)$. Then $R^N$ has dimension $1$ and, for every $k$ such that $0\leq k\leq N$ we have that the multiplication map $$R^{k}\otimes R^{N-k}\rightarrow R^N$$ 
is a perfect pairing. Moreover $R^k=0$ for $k>N$ or $k<0$ and the dimension of $R^k$ for $0\leq k\leq N$ is determined only by $d$.
\end{thm}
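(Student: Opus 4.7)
The plan is to exploit the fact that for a smooth plane curve $F = \{f = 0\}$, smoothness is equivalent to the partial derivatives $f_x, f_y, f_z$ having no common zero in $\PP^2$, so their common vanishing locus in $\C^3$ is reduced to the origin. Since this zero set has codimension $3$ and $S = \C[x,y,z]$ is Cohen--Macaulay, the three homogeneous elements $f_x, f_y, f_z$, each of degree $d-1$, form a regular sequence in $S$; hence $R = S/J$ is a graded Artinian complete intersection. All the assertions of the theorem then follow from standard facts about such rings.

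First I would compute the Hilbert series of $R$. The Koszul complex on $(f_x, f_y, f_z)$ is acyclic and provides a graded free resolution of $R$ as an $S$-module, from which one reads off
\[
\sum_{k\geq 0}\bigl(\dim_{\C} R^k\bigr)\, t^k \;=\; \frac{(1-t^{d-1})^3}{(1-t)^3} \;=\; \bigl(1 + t + \cdots + t^{d-2}\bigr)^3.
\]
The right-hand side is a polynomial in $t$ of degree $N = 3(d-2)$ with leading coefficient $1$, which proves $R^k = 0$ for $k > N$ and $\dim R^N = 1$. Since this expression depends only on $d$, so does each $\dim R^k$. Moreover the factor $1 + t + \cdots + t^{d-2}$ is palindromic, so the Hilbert function satisfies the symmetry $\dim R^k = \dim R^{N-k}$ needed for the pairing.

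The perfect pairing is the deepest point: it is graded Artinian Gorenstein duality for complete intersections. I would deduce it from the self-duality of the Koszul complex. Dualizing the Koszul resolution with attention to degree shifts gives $\Ext^3_S(R, S) \cong R(3(d-1))$; combined with graded local duality (using $\omega_S = S(-3)$) this yields $\omega_R \cong R(N)$, so $R$ is Gorenstein and its socle is exactly $R^N$. To pass from this to the pairing, fix $0 \neq r \in R^k$ and suppose $r \cdot R^{N-k} = 0$. The cyclic graded submodule $Rr \subset R$ is nonzero and Artinian, so it contains a nonzero socle element of $R$; that element lives in degree $N$ and has the form $r \cdot s$ with $s \in R^{N-k}$, contradicting our assumption. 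Hence $R^k \to \Hom_{\C}(R^{N-k}, R^N)$ is injective, and by the symmetry of dimensions it is an isomorphism.

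The main obstacle is the Gorenstein step. The Hilbert series calculation is routine Koszul bookkeeping, and once Gorensteinness is in hand the socle argument above is essentially formal; but showing directly that a graded Artinian complete intersection is Gorenstein requires careful tracking of shifts in the dualized Koszul complex (or invoking graded local duality as a black box), which is the genuinely nontrivial input.
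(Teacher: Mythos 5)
Your proof is correct; the paper itself offers no proof of this statement. It is quoted as the classical theorem of Macaulay, the only input supplied in the text being the observation that smoothness of $F$ makes $\{f_x,f_y,f_z\}$ a regular sequence, with the reader referred to \cite{Voisin} for the rest. Your argument is, in substance, the standard modern proof of that classical fact, and every step checks out: the unmixedness property of the Cohen--Macaulay ring $S$ turns the codimension count (no common zeros in $\PP^2$, hence only the origin in $\C^3$) into the regular sequence claim; acyclicity of the Koszul complex on three forms of degree $d-1$ gives the Hilbert series $(1+t+\cdots+t^{d-2})^3$, which simultaneously yields $\dim R^N=1$, the vanishing $R^k=0$ for $k>N$, the dependence of $\dim R^k$ on $d$ alone, and the palindromic symmetry $\dim R^k=\dim R^{N-k}$; and the degree bookkeeping in the dualized Koszul complex is right, $\Ext^3_S(R,S)\cong R(3(d-1))$, so with $\omega_S=S(-3)$ one gets $\omega_R\cong R(3d-6)=R(N)$. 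Your socle argument then correctly upgrades Gorensteinness to nondegeneracy: since $R^N$ is annihilated by the irrelevant ideal and the socle is one-dimensional, the socle is exactly $R^N$, so for $0\neq r\in R^k$ the nonzero module $Rr$ meets $R^N$ nontrivially in an element of the form $rs$ with $s\in R^{N-k}$; injectivity of $R^k\to\Hom_{\C}(R^{N-k},R^N)$ plus the dimension symmetry makes the pairing perfect. The one genuinely nontrivial black box you invoke, graded local duality (or equivalently the self-duality of the Koszul complex), is exactly the input used in the standard references, so nothing is missing.
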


\noindent In addition to these, Griffiths proved that one can read canonically several pieces of the Hodge structure of $F$ in $R$. More precisely

\begin{thm}
\label{THM:GRIFF}
Let $R$ be the Jacobian ring of a smooth plane curve of degree $d$. Then
\begin{itemize}
\item $H^0(F,\omega_F)\simeq S^{d-3}=R^{d-3}$;
\item $H^1(F,\OO_F)\simeq R^{N-d+3}=R^{2d-3}$;
\item the subspace of $H^1(F,T_F)$ of all the infinitesimal deformations that preserve the planarity of $F$ is isomorphic to $R^d$;
\item multiplication in $R$ induces, using the previous identifications, cup product of the corresponding elements. 
\end{itemize}
\end{thm}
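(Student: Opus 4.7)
The plan is to derive all four items from Griffiths' residue calculus on the pair $(\PP^2,F)$. The first two statements identify the Hodge pieces of $H^1(F,\C)$, the third produces the planar deformation space as the cokernel of the action of infinitesimal automorphisms of $\PP^2$, and the fourth is the Griffiths infinitesimal period relation.

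For the first identification, apply the adjunction $\omega_F\simeq \cO_F(d-3)$ and take the long exact sequence of
\[
0\to \cO_{\PP^2}(-3)\xrightarrow{\cdot f}\cO_{\PP^2}(d-3)\to \cO_F(d-3)\to 0.
\]
Using $H^i(\PP^2,\cO_{\PP^2}(-3))=0$ for $i=0,1$, this yields $H^0(F,\omega_F)\simeq S^{d-3}$, which coincides with $R^{d-3}$ because the generators of $J$ have degree $d-1>d-3$, so $J^{d-3}=0$. For the second, use the Poincar\'e residue of meromorphic $2$-forms $P\,\Omega/f^2$ on $\PP^2$, where $\Omega$ is the Euler $2$-form and $\deg P=2d-3$, together with Griffiths' pole-reduction: modulo exact rational forms, $(\sum_i Q_i f_{x_i})\,\Omega/f^2$ reduces to a simple pole. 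Hence the residue of $P\,\Omega/f^2$ in $H^1(F,\cO_F)\simeq H^1(F,\C)/F^1$ vanishes exactly when $P\in J^{2d-3}$, producing an injection $R^{2d-3}\hookrightarrow H^1(F,\cO_F)$; matching the Macaulay dimension of $R^{2d-3}$ (Theorem \ref{THM:MACAU}) with the genus $g=\binom{d-1}{2}$ upgrades this to an isomorphism.

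For the third item, begin with the normal bundle exact sequence
\[
0\to T_F\to T_{\PP^2}|_F\to N_{F/\PP^2}\to 0,
\]
so that the planar deformations of $F$ appear as the image of the coboundary $\delta\colon H^0(F,N_{F/\PP^2})\to H^1(F,T_F)$. Since $N_{F/\PP^2}\simeq \cO_F(d)$, one has $H^0(F,N_{F/\PP^2})\simeq S^d/\langle f\rangle$; the Euler sequence restricted to $F$ identifies the image of $H^0(F,T_{\PP^2}|_F)$ in $H^0(F,N_{F/\PP^2})$ with the span of $\{x_j f_{x_i}\}$, that is with $J^d/\langle f\rangle$ (noting $f\in J^d$ by Euler's relation). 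Consequently the image of $\delta$, the space of planar deformations, is $S^d/J^d=R^d$.

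Finally, the compatibility of the ring multiplication with cup product is Griffiths' infinitesimal period relation. One verifies it on representatives: if $\xi\in R^d$ is lifted by $P\in S^d$ and $\omega\in H^0(F,\omega_F)$ is represented by $Q\in S^{d-3}$ via $Q\,\Omega/f$, then differentiating this residue presentation along the deformation determined by $P$ produces a double-pole form whose numerator is $PQ\in S^{2d-3}$, and the class $[PQ]\in R^{2d-3}$ is precisely $\xi\cdot\omega\in H^1(F,\cO_F)$. The principal technical effort lies in the second item: justifying Griffiths' residue and pole-reduction formalism for $\PP^2$ and identifying the kernel with $J^{2d-3}$. Once that is in place, the remaining statements follow from standard sheaf cohomology, the Euler sequence, and the compatibility of the residue with Serre duality via the Macaulay pairing $R^{d-3}\times R^{2d-3}\to R^{3d-6}\simeq \C$.
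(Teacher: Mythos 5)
Your proposal is correct and amounts to reconstructing the standard Griffiths residue-calculus proof: adjunction plus the ideal-sheaf sequence for $H^0(F,\omega_F)\simeq R^{d-3}$, pole-order-two residues and pole reduction for $H^1(F,\cO_F)\simeq R^{2d-3}$, the normal-bundle and Euler sequences identifying the planar deformations with $S^d/J^d=R^d$ (the needed surjectivity of $H^0(\PP^2,T_{\PP^2})\to H^0(F,T_{\PP^2}|_F)$ holds in the paper's range $d\geq 5$), and the infinitesimal period relation for the cup-product compatibility. This is precisely the treatment the paper intends, since it offers no argument of its own for Theorem \ref{THM:GRIFF} beyond the citation of \cite{Voisin}, where exactly this residue formalism is carried out.
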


\noindent We refer to \cite{Voisin} for a proof of these facts.
\vspace{2mm}

\noindent Next we quote two theorems of Green concerning properties of $S=\mathbb C[x,y,z]$. We stress that they are valid in any dimension although we state (and use) them only in dimension $n=2$. The first theorem can be found in \cite[Lecture 7, page 74]{Green_CIME}, putting $p=0$ :

\begin{thm}[Green]
\label{THM:GREEN}
Let $W$ be a subspace of $S^a$ of codimension $c$ and assume that $|W|$ is a base point free linear system on $\PP^2$. Then, for any $m\geq c$ one has that the multiplication map
$$W\otimes S^m\rightarrow S^{m+a}$$
is surjective.
\end{thm}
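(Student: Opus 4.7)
The plan is to prove Green's theorem in two stages: first the analogous statement on $\PP^1$, then the $\PP^2$ case by restriction to a generic line combined with a descending induction on degree.

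For the $\PP^1$ statement, suppose $\bar W \subset \bar S^a = \C[x,y]^a$ has codimension $\bar c$ and $|\bar W|$ is base-point-free. Pick $g_1, g_2 \in \bar W$ with no common zero, and use the exact Koszul sequence
\[
0 \to \bar S(-2a) \xrightarrow{(g_2,\, -g_1)} \bar S(-a)^{\oplus 2} \xrightarrow{(g_1,\, g_2)} \bar S
\]
to compute that the cokernel of $\langle g_1, g_2\rangle \cdot \bar S^m \to \bar S^{m+a}$ has dimension $a - m - 1$ when $0 \leq m \leq a-1$ and vanishes for $m \geq a$. Then use the remaining $a - \bar c - 1$ independent elements of $\bar W$ modulo $\langle g_1, g_2\rangle$, combined with a general-position argument and the condition $m \geq \bar c$, to fill this cokernel.

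For the $\PP^2$ statement, given $W \subset S^a$ of codimension $c$ with $|W|$ base-point-free and $m \geq c$, pick a generic linear form $\ell \in S^1$. Setting $\bar S = S/\ell S$ and $\bar W = \mathrm{image}(W \hookrightarrow S^a \twoheadrightarrow \bar S^a)$, for $\ell$ outside a proper closed subset of $\PP(S^1)$ the line $V(\ell) \cong \PP^1$ misses the (empty) base locus of $|W|$, so $|\bar W|$ is base-point-free on $V(\ell)$, and $\bar c := \mathrm{codim}(\bar W, \bar S^a) \leq c \leq m$. The $\PP^1$ step then gives $\bar W \cdot \bar S^m = \bar S^{m+a}$, hence $W \cdot S^m + \ell \cdot S^{m+a-1} = S^{m+a}$. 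If we additionally know $W \cdot S^{m-1} = S^{m+a-1}$, then any $h = \sum w_i t_i \in S^{m+a-1}$ yields $\ell h = \sum w_i (\ell t_i) \in W \cdot S^m$, giving surjectivity at $m$. This sets up a descending induction on $m$, which must be closed at the base degree $m = c$.

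The main obstacle is closing the descending induction at $m = c$, since the recursion would call on surjectivity at $m = c - 1$, which lies outside the hypothesis range. To handle this, I would switch to a secondary induction on $c$, with the trivial base case $c = 0$ giving $W = S^a$. For the inductive step, I would invoke the theorem for a base-point-free subspace $W' \supsetneq W$ of codimension $c - 1$ (which exists generically, as base-point-freeness is an open condition on the Grassmannian), and then use a separate linear-algebra argument to pass from $W' \cdot S^c = S^{c+a}$ to $W \cdot S^c = S^{c+a}$, exploiting the codimension-one inclusion $W \subset W'$ together with the base-point-freeness of $|W|$ itself (not just of $|W'|$). This last step, where the distinction between $W$ and $W'$ must actually be bridged, is the true crux of the argument.
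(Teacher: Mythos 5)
First, note that the paper contains no proof of this statement: Theorem~\ref{THM:GREEN} is quoted verbatim from \cite{Green_CIME} (Lecture 7, p.~74, with $p=0$), so your proposal has to stand on its own as a complete proof, and it does not. The fatal gap is the one you flag yourself: the bridge from $W'$ of codimension $c-1$ to $W$ of codimension $c$ at the critical degree $m=c$. Writing $W'=W\oplus \C h$, the inductive hypothesis gives $W\cdot S^c + h\cdot S^c = S^{a+c}$, and no linear-algebra argument can force the deficit $h\cdot S^c$ into $W\cdot S^c$: if such a bridge worked ``softly'' (using only base-point-freeness of $|W|$ and the codimension-one inclusion, with no quantitative use of $m\geq c$), then the induction on $c$ starting from $W=S^a$ would prove surjectivity of $W\otimes S^m\to S^{m+a}$ for \emph{every} $m\geq 0$ and every base-point-free $W$, which is false --- your own Koszul computation shows that a base-point-free pencil on $\PP^1$ (codimension $c=a-1$) fails to surject at $m=c-1$ and succeeds exactly at $m=c$. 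So the bound $m\geq c$ is sharp, the bridge must compare the deficit to the codimension quantitatively, and that comparison \emph{is} the theorem; it is precisely here that Green's actual argument invokes the hyperplane-restriction machinery of Theorem~\ref{THM:GREENHYP}, not a linear-algebra afterthought. (By contrast, the existence of a base-point-free $W'\supset W$, which you justify by an openness argument on the Grassmannian, is trivial: any overspace of a base-point-free system is base-point-free. You spend genericity where none is needed and omit the step where all the content lives.)

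There is a second, smaller gap in your $\PP^1$ stage: the count of $a-\bar c-1$ leftover elements against the $(a-m-1)$-dimensional cokernel of the pencil is only a necessary condition, and ``general position'' is not available because the leftover elements are constrained to lie in the \emph{fixed} space $\bar W$; you must actually prove their multiples span $(\bar S/(g_1,g_2))_{m+a}$, and no argument is given. This part is repairable, and more cleanly than via the Koszul count: the kernel bundle $M_{\bar W}=\ker\left(\bar W\otimes \cO_{\PP^1}\to \cO_{\PP^1}(a)\right)$ splits as $\bigoplus_{i=1}^{a-\bar c}\cO_{\PP^1}(-a_i)$ with every $a_i\geq 1$ (since $H^0(M_{\bar W})=0$) and $\sum_i a_i = a$, whence $\max_i a_i\leq \bar c+1$ and $H^1(M_{\bar W}(m))=0$ for all $m\geq \bar c$, which is exactly the desired surjectivity on the line. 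Your remaining scaffolding --- restriction to a generic line, $\bar c\leq c$, and the step ``surjectivity in degree $m-1$ implies surjectivity in degree $m$'' via $\ell\cdot S^{m+a-1}\subseteq W\cdot S^m$ --- is correct, and indeed echoes the $M_W$/restriction technique the paper itself deploys in the proof of Theorem~\ref{THM:MAIN}; but since the whole induction bottoms out at the unproven case $m=c$ on $\PP^2$, the proposal as written is not a proof.
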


\noindent In order to state the second theorem of Green we need some notation.
Given a positive integer $a$, for any $c\ge 0$ there is a unique expression
$$
c=\binom {k_a}{a}+\binom {k_{a-1}}{a-1}+\dots +\binom {k_1}{1}=\binom {k_a}{a}+\binom {k_{a-1}}{a-1}+\dots +\binom {k_\delta}{\delta},
$$
such that $k_a > k_{a-1} >\dots >k_{\delta}\geq \delta >0.$  The numbers $(k_a,k_{a-1},\dots,k_\delta)$ are uniquely identified by this definition and are called {\it Macaulay's Coefficients of $c$ with respect to $a$}. 
\vspace{2mm}

\noindent If $(k_a,k_{a-1},\dots,k_{\delta})$ are the Macaulay coefficients with respect to $a$, we denote by $c_{\left\langle a \right\rangle}$ the number
$$
c_{\left\langle a \right\rangle}=\binom {k_a-1}{a}+\binom {k_{a-1}-1}{a-1}+\dots +\binom {k_\delta-1}{\delta},
$$
where $\binom{m}{n}=0$ if $m<n$. Keeping this terminology we can state the following theorem:

\begin{thm}[Green, \cite{Green_hyperplane} ]
\label{THM:GREENHYP}
Let $W\subset S^a$ be a linear system with codimension $c$. Let $H$ be a general line in $\mathbb P^2$. Then the codimension $c_H$ of the image of the restriction map
$$W \longrightarrow H^0(H,\OO_H(a))$$
satisfies $c_H\le c_{\left\langle a \right\rangle}$.
\end{thm}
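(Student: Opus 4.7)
The plan is to follow Green's original strategy, reducing the problem to a combinatorial statement about lex-segment monomial subspaces via the theory of generic initial ideals.

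First, I would normalize the line $H$. Since $H$ is general in $\PP^2$, a suitable linear change of variables lets me assume $H = \{z = 0\}$, so that the restriction map $S^a \to H^0(H, \OO_H(a)) \cong \C[x,y]_a$ becomes evaluation at $z = 0$, with kernel $z \cdot S^{a-1}$. Consequently $c_H = (a+1) - \dim W + \dim(W \cap z S^{a-1})$, and bounding $c_H$ from above becomes equivalent to bounding $\dim(W \cap z S^{a-1})$ from above.

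Second, I would replace $W$ by its generic initial analogue with respect to the reverse-lex order. Concretely, acting on $W$ by a generic element of $\GL_3$ and taking the monomial flat limit produces a Borel-fixed monomial subspace of $S^a$ with the same codimension $c$; this operation preserves the Hilbert function of the associated graded ideal, and semi-continuity of dimensions in flat families ensures that $c_H$ can only increase under this replacement. Thus it suffices to prove the theorem assuming $W$ is a Borel-fixed monomial subspace, in which case $W \cap z S^{a-1}$ is simply the span of those monomials in $W$ that are divisible by $z$.

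Third, among Borel-fixed monomial subspaces of $S^a$ of codimension $c$, I would show that $c_H$ is maximized by the lex-segment subspace --- the one whose complement in $S^a$ consists of the $c$ largest monomials in the lexicographic order. This extremality is proved by exhibiting a sequence of elementary ``swap'' moves between Borel-fixed subspaces, each of which renders the configuration more lex-like and does not decrease $c_H$. Finally, for the lex-segment itself, one writes the Macaulay decomposition $c = \binom{k_a}{a} + \binom{k_{a-1}}{a-1} + \cdots + \binom{k_\delta}{\delta}$ and counts the monomials of $\C[x,y]_a$ that survive restriction but lie outside $W$; a direct binomial manipulation shows that this count equals exactly $c_{\left\langle a \right\rangle}$.

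The main obstacle is the third step: proving that the lex-segment is extremal for $c_H$ among Borel-fixed subspaces of fixed codimension. This is the combinatorial heart of Green's theorem and requires either careful bookkeeping of Borel-type exchanges or an appeal to Gotzmann's persistence theorem. By contrast, the gin reduction of step two is standard flat-degeneration theory, and the final binomial identity for the lex-segment is routine.
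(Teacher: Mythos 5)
First, a framing remark: the paper does not prove Theorem \ref{THM:GREENHYP} at all --- it is quoted from Green \cite{Green_hyperplane}, and the only argument the paper supplies is the derivation of Corollary \ref{COR:GREENHYP} from it. So there is no internal proof to compare against; your outline corresponds to the generic-initial-ideal route found in Green's later expository lectures, whereas the cited paper proceeds by a direct induction on the codimension and the degree using restriction to general hyperplanes, without Borel-fixed reductions. Your first two steps are sound: normalizing $H$ to $\{z=0\}$ after a generic change of coordinates, and degenerating $g\cdot W$ (for generic $g\in\GL_3$) to its initial subspace in reverse-lex order, which is Borel-fixed of the same dimension; the semicontinuity goes in the right direction, since the rank of the restriction map can only drop in the flat limit, so an upper bound for $c_H$ on the monomial special fibre implies the bound for the original $W$.

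The genuine gap is in your third step, in two respects. First, your description of the extremal configuration is backwards: with $x>y>z$ and $W$ Borel-fixed (closed under moving variables up), the extremal $W$ is spanned by the $\dim W$ lex-\emph{largest} monomials, so its complement consists of the $c$ lex-\emph{smallest} ones. If instead the complement were the $c$ largest monomials, as you write, then already for $c=2<a$ the complement would be $\{x^a,\,x^{a-1}y\}$, which consists entirely of $z$-free monomials, giving $c_H=2$; but the Macaulay decomposition $2=\binom{a}{a}+\binom{a-1}{a-1}$ yields $c_{\left\langle a \right\rangle}=\binom{a-1}{a}+\binom{a-2}{a-1}=0$, so your claimed extremal object violates the very bound it is supposed to realize (and contradicts Corollary \ref{COR:GREENHYP}, which is the case the paper actually uses). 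Second, the combinatorial heart --- that the correctly oriented lex segment maximizes the number of $z$-free monomials in the complement among Borel-fixed subspaces of codimension $c$, and that this count equals $c_{\left\langle a \right\rangle}$ --- is precisely the step you defer, and the suggested appeal to Gotzmann's persistence theorem is circular in this context: Gotzmann persistence is itself standardly deduced from Macaulay--Green restriction estimates, which is the whole point of \cite{Green_hyperplane}. As it stands, the proposal is a reasonable road map for a known alternative proof, but it omits its only nontrivial ingredient and miswrites the extremal example on which the binomial count is to be performed.
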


\begin{cor}
\label{COR:GREENHYP}
Under the assumption of Theorem \ref{THM:GREENHYP}, if $c < a$ then $c_H=0$, i.e. the restriction map $W\rightarrow H^0(H,\OO_H(a))$ is surjective.
\end{cor}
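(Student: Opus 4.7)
The plan is to invoke Theorem \ref{THM:GREENHYP} directly, which gives $c_H \leq c_{\langle a \rangle}$. Since $c_H \ge 0$, it suffices to show that the hypothesis $c < a$ forces $c_{\langle a \rangle} = 0$. This reduces the corollary to a purely combinatorial statement about the Macaulay representation of $c$ with respect to $a$.

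The first step I would take is to bound the leading Macaulay coefficient $k_a$. Writing $c = \binom{k_a}{a} + \binom{k_{a-1}}{a-1} + \dots + \binom{k_\delta}{\delta}$, the strict chain $k_a > k_{a-1} > \dots > k_\delta \geq \delta$ implies $k_i \geq i$ for every $i$ appearing in the sum; in particular $k_a \geq a$. On the other hand, $\binom{k_a}{a}$ is one of the non-negative summands whose total is $c$, so $\binom{k_a}{a} \leq c < a$. Since $\binom{a+1}{a} = a+1 > c$, this rules out $k_a \geq a+1$, and therefore $k_a = a$.

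Next I would iterate downward. From $k_{a-1} < k_a = a$ combined with $k_{a-1} \geq a-1$ I conclude $k_{a-1} = a-1$, and inductively $k_i = i$ for every $\delta \leq i \leq a$. Each such term contributes $\binom{k_i - 1}{i} = \binom{i-1}{i} = 0$ to $c_{\langle a \rangle}$ under the convention $\binom{m}{n} = 0$ for $m < n$ adopted in the paper. Hence $c_{\langle a \rangle} = 0$, and the surjectivity of $W \to H^0(H, \OO_H(a))$ follows.

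I do not foresee any real obstacle: the whole argument is a short combinatorial manipulation of the definition of the Macaulay coefficients, once Theorem \ref{THM:GREENHYP} is granted. The only subtlety worth verifying carefully is the bound $k_i \geq i$ for all $i$ in the expansion, which comes for free from the strict descent $k_a > k_{a-1} > \dots > k_\delta$ together with $k_\delta \geq \delta$.
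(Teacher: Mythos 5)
Your proposal is correct and follows essentially the same route as the paper: both identify the Macaulay coefficients of $c<a$ with respect to $a$ as $k_i=i$, so that every term $\binom{k_i-1}{i}=\binom{i-1}{i}$ of $c_{\left\langle a\right\rangle}$ vanishes. The only cosmetic difference is that the paper exhibits the decomposition $c=\binom{a}{a}+\dots+\binom{r+1}{r+1}$ and appeals to uniqueness, whereas you derive $k_i=i$ by a downward induction; the content is the same.
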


\begin{proof}
If $c\leq a$ we can write $c=a-r$ for some $r$ such that $0\leq r\leq a$.
As 
$$c=a-r=\binom{a}{a}+\binom {a-1}{a-1}+\dots +\binom {r+1}{r+1}$$
we have that $(a,a-1,\dots,r+1)$ are the Macaulay's coefficients of $c$ with respect to $a$. Therefore 
$$c_{\left\langle a \right\rangle}=\binom{a-1}{a}+\binom {a-2}{a-1}+\dots +\binom {r}{r+1}=0.$$
By definition of $c_H$ we have that the restriction is surjective as claimed.
\end{proof}


\section{Proof of the Theorem (\ref{main2})}

\noindent The Theorem (\ref{main2}) in the introduction asserts that given a smooth curve $F$ of degree $d$, the rank of a non trivial infinitesimal deformation of $F$ which preserves the planarity of $F$ is bounded below by $d-3$. By using the identifications provided by Griffiths' results in (\ref{THM:GRIFF}) this translates into the following statement: 

\begin{thm}
\label{THM:MAIN}
Let $F$ be a smooth plane curve of degree $d\geq 5$ and let $R$ be its Jacobian ring. Let $\xi\in R^d\setminus \{0\}$, then the rank of the map
\[
 S^{d-3}=R^ {d-3} {\overset{ \cdot \xi} \longrightarrow} R^{2d-3}
\]
is at least $d-3$. 
\end{thm}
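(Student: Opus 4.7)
The plan is to argue by contradiction: assume the rank is at most $d-4$, so the kernel $W$ of $\cdot\,\xi\colon R^{d-3}\to R^{2d-3}$ has codimension $c\le d-4$ in $R^{d-3}$. For $d\ge 5$ the three generators of $J$ lie in degree $d-1>d-3$, so $J^{d-3}=0$, $R^{d-3}=S^{d-3}$, and we view $W\subset S^{d-3}$. Fix a lift $\tilde\xi\in S^d$ of $\xi$; the kernel condition rewrites as $W\tilde\xi\subset J^{2d-3}$, i.e.\ every $P\in W$ admits a decomposition $P\tilde\xi=Af_x+Bf_y+Cf_z$ with $A,B,C\in S^{d-2}$.

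The engine is Green's multiplication theorem (Theorem \ref{THM:GREEN}). If $|W|$ is base-point-free, then since $c\le d-4\le d-3$ we can apply Green with $m=d-3$ and conclude $W\cdot S^{d-3}=S^{2d-6}$. Multiplying the inclusion $W\tilde\xi\subset J^{2d-3}$ by $S^{d-3}$ then yields
\[
\tilde\xi\cdot S^{2d-6}=\tilde\xi\cdot(W\cdot S^{d-3})=(\tilde\xi W)\cdot S^{d-3}\subset J^{2d-3}\cdot S^{d-3}\subset J^{N},
\]
with $N=3(d-2)$. Hence $\xi\cdot R^{2d-6}=0$ in $R^{N}$, and Macaulay's perfect pairing $R^{d}\times R^{2d-6}\to R^{N}$ (Theorem \ref{THM:MACAU}) forces $\xi=0$, contradicting $\xi\ne 0$.

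It remains to verify base-point-freeness of $|W|$. A common polynomial factor of degree $e\ge 1$ would give $W\subset g\cdot S^{d-3-e}$ for some $g\in S^{e}$, whence $c\ge \binom{d-1}{2}-\binom{d-1-e}{2}\ge d-2$, violating $c\le d-4$; so the base locus of $W$ is at most $0$-dimensional. The real obstacle is excluding isolated base points. For this I would invoke Green's hyperplane theorem (Corollary \ref{COR:GREENHYP}): since $c<d-3$, the restriction $W\to H^{0}(\OO_{H}(d-3))$ is surjective for a general line $H\subset\PP^{2}$. Combined with the decomposition $W\tilde\xi\subset J^{2d-3}$ restricted to $H$—and the observation that on a generic $H$ the polynomials $f_x|_H,f_y|_H,f_z|_H$ are coprime on $H\cong\PP^{1}$—one should be able either to extract a base-point-free subspace of $W$ of codimension $\le d-3$ (to feed back into Green's multiplication theorem) or to derive $\xi=0$ directly by a local analysis near the putative base points. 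This last step is where I expect the technical heart of the proof to lie.
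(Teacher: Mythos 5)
Your Case~1 (base-point-free $|W|$) and your reduction excluding a base component are exactly the paper's arguments and are correct. The genuine gap is the case you flag yourself: $|W|$ with a nonempty zero-dimensional base locus $Z$. Neither of your suggested exits works as stated: a subspace of $W$ can only have a base locus containing that of $W$, so there is no ``base-point-free subspace of $W$'' to extract, and no local analysis at the points of $Z$ is actually carried out; the coprimality of $f_x|_H,f_y|_H,f_z|_H$ on a general line plays no role in the paper's argument.

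What the paper actually does is a two-step degree climb followed by an injection of the Jacobian generators. Assuming $\codim W\le d-4$, one forms $W_1\subset H^0(\PP^2,\caI_Z(d-2))$ as the image of $W\otimes S^1$ and then $W_2\subset H^0(\PP^2,\caI_Z(d-1))$ as the image of $W_1\otimes S^1$. Corollary \ref{COR:GREENHYP} is used precisely to show these steps do not increase the codimension: surjectivity of $W\to H^0(L,\OO_L(d-3))$ for a general line $L$ disjoint from $Z$ gives $H^1(L,M_L)=0$, whence $h^1(M_W(1))\le h^1(M_W)$ and $\codim_{S^{d-2}}H^0(\caI_Z(d-2))=\codim_{S^{d-3}}H^0(\caI_Z(d-3))$, so $\codim W_2\le\codim W_1\le\codim W\le d-4$. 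The reason for climbing to degree exactly $d-1$ is that this is the degree of $f_x,f_y,f_z$: since $F$ is smooth the partials have no common zero, so $\tilde W:=W_2+J^{d-1}$ is base-point-free and, because $Z\neq\emptyset$, has codimension at most $d-5$. Green's multiplication theorem with $m=d-5$ then gives $\tilde W\otimes S^{d-5}\twoheadrightarrow S^{2d-6}$; passing to $R$ kills the $J^{d-1}$ summand, so $W_2\otimes R^{d-5}\twoheadrightarrow R^{2d-6}$, and since every element of $W_2$ lies in $W\cdot S^2$ and is therefore annihilated by $\xi$ modulo $J$, Macaulay's pairing forces $\xi=0$, a contradiction. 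Without this mechanism your proof is incomplete at its hardest point.
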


\begin{proof}
Denote by $W=W_{\xi}\subset S^{d-3}$ the kernel of the multiplication map $$\cdot\xi:S^{d-3}\rightarrow R^{2d-3}.$$
Note that the codimension of $W$ in $S^{d-3}$ is equal to the rank of $\xi$.
The proof is divided in two cases depending of the existence or not of a fixed loci.

\vskip 3mm
\noindent \textbf{Case 1: $|W|$ is base-point-free}.\newline
We are going to see that $ \rk(\xi) \ge d-2$. Assume that the opposite is true, i.e. that $\rk(\xi)\leq d-3$ holds. Then, by Green's Theorem, for every $m\geq \cod_{S^{d-3}}(W)=\rk(\xi)$ we have that the multiplication map
$$\mu_{m}:W \otimes S^{m}\rightarrow S^{m+d-3}$$
is surjective.
In particular, as $\rk(\xi)\leq d-3$ we can take $m=d-3$. Hence we have
$$\mu_{d-3}:W \otimes S^{d-3}\rightarrow S^{2d-6}$$
is surjective and the same holds for the map obtained by passing to the quotient, i.e. to the Jacobian ring:
$$\mu_{d-3}:W \otimes R^{d-3}\rightarrow R^{2d-6}.$$
But this is impossible since by the definition of $W$, the image of $\mu_{d-3}$ is killed by $\xi\neq 0$, hence the pairing 
$$R^{2d-6}\otimes R^d\rightarrow R^{3d-6}=R^N$$
is degenerated contradicting Macaulay's Theorem. Hence we have necessarily $\rk(\xi)\geq d-2$ as claimed.

\vspace{3mm}
\noindent \textbf{Case 2: $|W|$ is not base-point-free}.\newline
First we observe that we can assume that there are no base components. Indeed, assume that there exists a curve
$C$ of degree $0 < d'<d-3$  in the fixed part of $|W|$. Then we have that $W\subset C\cdot S^{d-d'-3}$
and therefore $\dim W \le \dim S^{d-d'-3}\le \dim S^{d-4}$, hence
\[
 \text{codim}\, W\geq \binom {d-1}2 - \binom {d-2}2=d-2,
\]
as wanted.

\noindent Hence now on we assume that $|W|$ has only isolated base points. We proceed by contradiction, so assume that $\rk(\xi)\leq d-4$ holds. 
\vspace{2mm}

\noindent Let $Z$ be the base locus of $|W |$ and denote by $\caI_Z$ the ideal sheaf of $Z$ as subscheme of $\PP^2$. Then the evaluation induces a surjection
$$W \otimes \OO_{\PP^2}\twoheadrightarrow \caI_Z(d-3).$$
Denoting by $M_W$ its kernel we have the short exact sequence
\begin{equation}
\label{SES:T1}
\xymatrix{
0\ar[r]& M_{W }\ar[r]& W \otimes \OO_{\PP^2}\ar[r]^-{ev}& \caI_Z(d-3)\ar[r]& 0.
}
\end{equation}
Let $s$ be a general element in $S^1=H^0(\PP^2,\OO_{\PP^2}(1))$. As the base points are isolated, we can assume that the line $L=\{s=0\}$ is disjoint with $Z$. By considering the multiplication by $s$, the short exact sequence   (\ref{SES:T1}) induces the following commutative diagram with exact rows and columns 
\begin{equation}
\label{SES:T2}
\xymatrix{
 &
	0\ar[d] &
	0\ar[d] &
	0\ar[d] &
	\\
0 \ar[r] &
    M_{W }\ar[r]\ar[d]_{\cdot s}&
    W \otimes \OO_{\PP^2}\ar[r]^-{ev}\ar[d]_{\cdot s}& 
    \caI_Z(d-3)\ar[r]\ar[d]_{\cdot s}&
    0\\
0\ar[r]&
	M_{W }(1)\ar[r]\ar[d]&
    W \otimes \OO_{\PP^2}(1)\ar[r]^-{ev_1}\ar[d]&
    \caI_Z(d-2)\ar[r]\ar[d]&
    0\\
0\ar[r]&
	M_L\ar[r]\ar[d]&
    W \otimes \OO_{L}(1)\ar[r]^-{ev_L}\ar[d]&
    \OO_L(d-2)\ar[r]\ar[d]&
    0.\\
&
	0 &
	0 &
	0 &
}
\end{equation}
The sheaf  $M_L$ in the third row is by the definiton the kernel of $ev_L$. Notice that the rightmost sheaf of the last row is $\OO_L(d-2)$ because we are assuming that $L$ is disjoint from $Z$.
\vspace{2mm}

\noindent {\bf Claim:} The vanishing $H^1(L,M_L)=0$ holds.
\begin{proof} (of the claim).
Indeed, under our hypothesis, Corollary \ref{COR:GREENHYP} implies that the restriction map
$$W \rightarrow H^0(L,\OO_L(d-3))$$
is surjective. Tensoring with $H^0(L,\OO_L(1))$ we get that also the evaluation map 
$$
ev_l: W \otimes H^0(L,\OO_L(1)) \longrightarrow H^0(L,\OO_L(d-3))\otimes H^0(L,\OO_L(1)) \twoheadrightarrow   H^0(L,\OO_L(d-2))
$$
is surjective. Then the cohomology sequence of the third row in the diagram gives the vanishing. 
\end{proof}

\noindent By taking cohomology in the second row we have a map: 
$$
\xymatrix{
W \otimes S^1\ar[r]^-{\eta_1}&H^0(\PP^2,\caI_Z(d-2))\ar[r]& H^1(\PP^2,M_{W }(1))\ar[r] & 0
}$$
and denote with $W_1$ the image of $\eta_1$. Our strategy is to replace $W$ by $W_1\subset S^{d-2}$ and apply again the same argument in order to finally reach a subspace of $W_2\subset S^{d-1}$ where it is easier to finish the proof as we will see below. Hence we need to show that the  codimension of $W_1$ in $S^{d-2}$ is lower than or equal to the codimension of $W$ in $S^{d-3}$. To prove this we consider the cohomology exact sequences of the first two rows of the diagram (\ref{SES:T2}):

\[
\xymatrix{
W \ar@{^{(}->}[r]\ar[d]_-{\cdot s}&
   H^0(\PP^2,\caI_Z (d-3))  \ar[r]\ar[d]_-{\cdot s} & 
    H^1(\PP^2,M_W)\ar[r]\ar@{->>}[d]_-{\cdot s}&
    0
    \\
W \otimes S^1\ar[r]^-{\eta_1} &
   H^0(\PP^2,\caI_Z(d-2))  \ar[r] & 
    H^1(\PP^2,M_W(1))\ar[r]&
    0
   }
\]
The last vertical arrow is surjective due to the claim. Therefore 
\begin{equation}\label{CODIM}
 \text{codim}_{H^0(\PP^2,\caI_Z(d-2))} W_1 = h^1(\PP^2,M_W(1))\le h^1(\PP^2,M_W)= \text{codim}_{H^0(\PP^2,\caI_Z(d-3))} W.
\end{equation}

\noindent Now we need to compare $\codim_{S^{d-2}} H^0(\PP^2,\caI_Z(d-2))$ with $\cod_{S^{d-3}}H^0(\PP^2,\caI_Z(d-3))$.
As we will observe in a moment, they coincide as a consequence of the vanishing of $H^1(L,M_L)=0$.
\vspace{2mm}

\noindent {\bf Claim:} We have the equality:
\[
 \text{codim}_{S^{d-2}}H^0(\PP^2,\caI_Z(d-2)) = \text{codim}_{S^{d-3}}H^0(\PP^2,\caI_Z(d-3)).
\]
\begin{proof} (of the claim).
We start again with the diagram (\ref{SES:T2}). The cohomology exact sequence of the last two columns gives
\[
\xymatrix{
W \ar[r]\ar[d]_{\cdot s}&
   H^0(\PP^2,\caI_Z (d-3)) \ar[d]_{\cdot s} 
    \\
W \otimes S^1\ar[r]^-{\eta_1} \ar[d]_{res_1} &
   H^0(\PP^2,\caI_Z(d-2)) \ar[d]_{res_2} 
   \\
W\otimes H^0(L,\mathcal O_L(1)) \ar[d] \ar[r]^{\alpha} & H^0(L,\mathcal O_L(d-2)) \ar[d] 
   \\
 0 \ar[r] & H^1(\PP^2,\caI_Z(d-3)).  
   }
\]
Since $H^1(L,M_L)=0$ both $\alpha $ and $\alpha \circ res_1$ are surjective. Therefore $res_2$ is also surjective. This implies the isomorphism
\[
 H^1(\mathbb P^2, \caI_Z(d-3)) {\overset{ \cong } \longrightarrow} H^1(\mathbb P^2, \caI_Z(d-2)).
\]
Now consider the diagram:
\[
 \xymatrix{
 0 \ar[r]& \mathcal I_Z(d-3) \ar[r] \ar[d] & \mathcal O_{\mathbb P^2}(d-3) \ar[r] \ar[d] & \mathcal O_Z(d-3) \ar[r] \ar[d]^{\cong } & 0 
    \\
 0 \ar[r] &\mathcal I_Z(d-2) \ar[r]& \mathcal O_{\mathbb P^2}(d-2) \ar[r]        & \mathcal O_Z(d-2) \ar[r]  & 0.
 }
\]
Implementing the isomorphism above we obtain the claim.
\end{proof}

\noindent Combining the inequality (\ref{CODIM}) with the claim we have:
$$\cod(W_1)\leq \cod(W )\leq d-4.$$

\noindent Observe that $|W_1|$ has, by construction, the same base locus of $|W |$ and, as we have just proven, $\cod(W_1)\leq  d-4$. Hence, we can apply the same argument using $W_1$ instead of $W $ starting with the surjection
$$W_1\otimes \OO_{\PP^2}\rightarrow \caI_Z(d-2).$$
By doing this we obtain $W_2\subset S^{d-1}$ which satisfy 
$$\cod(W_2)\leq \cod(W_1)\leq \cod(W )\leq d-4.$$
To finish the proof we consider the subspace
$$\tilde{W}:=W_2 + \langle f_x,f_y,f_z\rangle=W_2 + J^{d-1}.$$
Observe that in $\tilde{W}$ there is a section which doesn't vanish on at least one point (we are assuming that $Z$ is not empty) of the base locus, as the partial derivatives cannot all vanish in a point. Hence the dimension of $\tilde{W}$ has increased at least by $1$ and
$$\cod(\tilde{W})\leq \cod(W_2)-1\leq d-5.$$
Moreover $\tilde{W}$ is base point free by construction so we can apply Green's Theorem (\ref{THM:GREEN}) again with $m=d-5$ and we obtain that the multiplication map
$$\tilde{W}\otimes S^{d-5}\rightarrow S^{d-5+d-1}=S^{2d-6}$$
is surjective.
\vspace{2mm}

\noindent Passing to the quotient by the Jacobian ideal we have a surjective map
$$W_2\otimes R^{d-5}\rightarrow R^{2d-6}.$$ This implies by the definition of $W_2$ (image of $W_1 \otimes S^1$) and $W_1$ (image of $W\otimes S^1$) that all the elements in $R^{2d-6}$ are orthogonal to $\xi$
which contradicts Macaulay's Theorem. This finishes the prove of the bound.
\vspace{2mm}

\noindent Finally we see that the bound is sharp. Fix $d\geq 4$ and let $F$ be the Fermat curve of degree $d$ in $\PP^2$, i.e. the zero locus of the polynomial $f=x^d+y^d+z^d$.
In this case, the Jacobian ideal is simply 
\[
 J=(x^{d-1},y^{d-1},z^{d-1})
\]
and one can easily prove that $R^N=\langle (xyz)^{d-2}\rangle$.
Consider the element $[x^{d-2}y^2]\in R^d$ and denote it by $\xi$. Observe, moreover, that this is not zero as $x^{d-2}y^2\not\in J$.
If $m=x^ay^bz^c$ with $a+b+c=d-3$, we have $m\cdot \xi\neq 0$ if and only if $a= 0$ and $0\leq b\leq d-4$. Hence the image of the multiplication by $\xi$ is generated by the $d-3$ elements of
$$\{[x^{d-2}y^{2+a}z^{d-3-a}]\,|\, 0\leq a\leq d-4\}.$$
As they are independent we have $\rk(\xi)=d-3$. Notice that $W $ is generated by monomials and $x^{d-3}, y^{d-3}\in W $ but $z^{d-3}\not\in W $, hence the base locus of $|W |$ is $P_2=(0:0:1)$.
\end{proof}

\begin{rmk}
In Theorem \ref{THM:MAIN} we have proven that the bound 
$\rk(\cdot \xi)\geq d-3$ is sharp by showing that for the Fermat curve of degree $d$ there exists an element in $R^d$ whose rank is exactly $d-3$. There are other curves that have this property. For example, denote by $f_{\lambda}\in S^5$ the polynomial $x^5+y^5+z^5+5\lambda x^3y^2$ with $\lambda\in \C$ and consider the quintic $F_{\lambda}=\{f_\lambda=0\}$. Notice that $F_{0}$ is the Fermat quintic so the general quintic of this type is indeed smooth. Moreover, as $x^3y^2$ is not zero in the Jacobian ideal of the Fermat quintic, we know that $F_\lambda$ is not biholomorphic to $F_0$ for $\lambda$ general. If we denote by $R_\lambda$ the Jacobian ideal of $F_{\lambda}$ it is easy to see that $\xi_{\lambda}=[x^3y^2]$ is not zero in $R_\lambda$. Now we want to prove that $\rk(\xi_\lambda)=2$ for all $\lambda\in \C$. As we have already seen, $\xi_\lambda$ is non trivial so, by Theorem \ref{main2} we know that $\rk(\xi_\lambda)\geq 2$. In order to see that the equality holds it is enough to prove that $\dim W_{\xi_\lambda}\geq 4$ (as $\dim R_{\lambda}^2=6$). With a little bit of effort one can show that
$$x^2,xy,y^2,xz + 18\lambda^3yz \in  W_{\xi_{\lambda}}$$
thus proving the claim.
\end{rmk}

\end{document}